\newcommand{\RR}{\mathbb R}
\newcommand{\ZZ}{\mathbb Z}
\newcommand{\QQ}{\mathbb Q}
\newcommand{\CC}{\mathbb C}
\newcommand{\z}{\zeta}%
\newcommand{\OO}{\mathcal O}
\newcommand{\cC}{\mathcal{C}}
\newcommand{\bcC}{\bar{\cC}}
\newcommand{\group}[1]{\mathbf{#1}}
\DeclareMathOperator{\SL}{SL}
\DeclareMathOperator{\GL}{GL}
\newcommand{\G}{\Gamma}
\newcommand{\bG}{\group{G}}
\newcommand{\V}{\mathcal{V}}
\DeclareMathOperator{\Tr}{Tr}
\DeclareMathOperator{\Res}{Res}
\newcommand{\fh}{\mathfrak{h}}
\newcommand{\ip}[1]{\left \langle #1 \right \rangle}
\newcommand{\Vor}{Vorono\"{\i}\xspace}
\newcommand{\pphi}{\hat{\phi}} 
\newcommand{\vect}[1]{\begin{bmatrix} #1 \end{bmatrix}}
\newcommand{\mat}[1]{\begin{bmatrix} #1 \end{bmatrix}}%
\theoremstyle{plain}
\newtheorem{thm}{Theorem}[section]
\newtheorem{lem}[thm]{Lemma}
\newtheorem{prop}[thm]{Proposition}
\theoremstyle{definition}
\newtheorem{defn}[thm]{Definition}
\newtheorem*{remark}{Remark}
\begin{document}


\title{Binary Hermitian forms over a cyclotomic field}
\author{Dan Yasaki}
\address{Department of Mathematics and Statistics\\146 Petty Building\\ University of North Carolina at Greensboro\\Greensboro, NC 27402-6170}
\email{d\_yasaki@uncg.edu}
\date{}
\thanks{The original manuscript was prepared with the \AmS-\LaTeX\ macro
system.}

\keywords{\Vor polyhedron, Hermitian forms, perfect forms}
\subjclass[2000]{Primary 11H55; Secondary 53C35}
\begin{abstract}
Let $\z$ be a primitive fifth root of unity and let $F$ be the cyclotomic field $F=\QQ(\z)$.  Let $\OO\subset F$ be the ring of integers.  We compute the \Vor polyhedron of binary Hermitian forms over $F$ and classify $\GL_2(\OO)$-conjugacy classes of perfect forms.  The combinatorial data of this polyhedron can be used to compute the cohomology of the arithmetic group $\GL_2(\OO)$ and Hecke eigenforms.
\end{abstract}
\maketitle

\bibliographystyle{../amsplain_initials} 
\begin{section}{Introduction}
Let $F/\QQ$ be a number field, and let $\OO$ denote its ring of integers.  There exists an algorithm to compute all of the $\GL_n(\OO)$-equivalence classes of perfect $n$-ary quadratic forms over $F$ once an initial perfect form is found \cite{Gmod, koecher}. This is investigated in the totally real number field case in \cite{perfect,leiants}.   We remark that a different notion of perfection has been investigated in \cite{BCI, Cou,Pohst}. In this paper, we consider the cyclotomic field $\QQ(\z_5)$. 

Although Hermitian forms over number fields is of interest in its own right, our main motivation for this computation comes from an investigation of a Taniyama-Shimura type correspondence over $F$, relating Hecke eigenforms over $F$ with integer eigenvalues and elliptic curves over $F$.   Due to the work of Wiles et al.~ such a correspondence is known to exist for $F=\QQ$ \cite{darmon.ts}.  It is an open problem for $[F:\QQ] >1$.  This has been investigated for $F$ an imaginary quadratic field in \cite{Whthesis,Bythesis,Lithesis,Cr,CrWh,sengun.quad} and for $F$ a real quadratic field in \cite{De.rt5, De.ell}.  In an ongoing project joint  with P.~Gunnells, F.~ Hajir, and ~D.~Ramakrishnan, we are investigating the complex quartic field $F =\QQ(\z_5)$. To this end, we first compute the \Vor polyhedron associated to binary Hermitian forms over $\QQ(\z_5)$.  The \Vor polyhedron provides a combinatorial structure in which to perform the Hecke eigenvalue computations.  

The results of the computations in this paper could also be used to compute the elliptic points of $\GL_2(\OO)$ using techniques of \cite{Yasfix}.  Since the forms and their minimal vectors are given explicitly here, one could also extract invariants such as an additive analogue of the Hermite constant for $\QQ(\z_5)$ \cite{BCI, Pohst}.

The paper is organized as follows.  In Section~\ref{sec:notation}, we set notation.  In Sections~\ref{sec:sahc}--\ref{sec:primitive} we recall the \Vor polyhedron and its relation to Hermitian forms over $\QQ(\z_5)$.  The \Vor polyhedron for $\QQ(\z_5)$ is computed in Section~\ref{sec:voronoi}.

I would like to thank P.~Gunnells for many helpful tips, tricks and explanations.  I would also like to thank F.~Hajir for helpful conversations.
\end{section}
\begin{section}{Notation}\label{sec:notation}
First we set notation and recall and collect a few basis facts from algebraic number theory that will be used later. Next we set some notation for the algebraic group associated symmetric space we will be considering.
\begin{subsection}{Field}
Let $\z=\z_5 = e^{2\pi i/5}$ be a primitive fifth root of unity, and let $F$ be the cyclotomic field $F=\QQ(\z)$.  Let $\OO \subset F$ denote the ring of integers, and let $\bar{\cdot}$ denote complex conjugation.  Let $k \subset F$ denote the real subfield $k=\QQ(\sqrt{5})$, and let $\OO_k$ denote its ring of integers.  Then $u_5=(1+\sqrt{5})/2$ is a fundamental unit for $k$.

Let $\iota=(\iota_1,\iota_2)$ denote the (non complex conjugate) embeddings 
\[\iota : F \to \CC \times \CC\]
given by sending $\sqrt{5}$ to $(\sqrt{5},-\sqrt{5})$, or equivalently given by sending $\z$ to $(\z,\z^3)$.
Denote the non-trivial embedding by $\cdot'$.  Specifically, for $\alpha \in F$, let $(\alpha, \alpha')$ denote $\iota(\alpha)$. 
\end{subsection}

\begin{subsection}{Symmetric space}
Let $\bG$ be the $\QQ$-group $\Res_{F/\QQ}(\GL_2)$ and let $G=\bG(\RR)$ the
corresponding group of real points.  Let $K\subset G$ be a maximal
compact subgroup, and let $A_G$ be the identity component of the
maximal $\QQ$-split torus in the center of $G$.  Then the symmetric
space associated to $G$ is $X=G/KA_G$.  Let $\Gamma \subseteq
\GL_2(\OO)$ be a finite index subgroup. 
\end{subsection}

\begin{subsection}{Binary Hermitian forms over $F$}
\begin{defn}
A \emph{binary Hermitian form over $F$} is a map $\phi:F^2 \to k$ of the form 
\[\phi(x,y) = a x \bar{x} + b x \bar{y} + \bar{b}\bar{x} y + c y \bar{y},\]
where $a,c \in k$ and $b\in F$.
\end{defn}
Note that $\pphi = \phi + \phi'$ takes values in $\QQ$.  Indeed, $\pphi$ is precisely the composition $\Tr_{k/\QQ} \circ \phi$, and by choosing a $\QQ$-basis for $F$, $\pphi$ can be viewed as a quaternary quadratic form over $\QQ$.  In particular, it follows that $\pphi(\OO^2)$ is discrete in $\QQ$.  Using $\pphi$, we can define minimal vectors and perfection.

\begin{defn}  
The \emph{minimum of $\phi$} is \[m(\phi)=\inf_{v \in \OO^2 \setminus \{ 0\}} \pphi(v).\]
A vector $v\in \OO^2$ is \emph{minimal vector} for $\phi$ if $\phi(v)=m(\phi)$.  The set of minimal vectors for $\phi$ is denoted $M(\phi)$. 
\end{defn}

\begin{defn}
A Hermitian form over $F$ is \emph{perfect} if it is uniquely determined by $M(\phi)$ and $m(\phi)$. 
\end{defn}
Perfection is a $\QQ$-homothety invariant.  Indeed, for $c \in \QQ$, $M(c \phi) = M(\phi)$ and $m(c\phi) = c m(\phi)$.
\end{subsection}
\end{section}
\begin{section}{Self-adjoint homogeneous cone}\label{sec:sahc}
\begin{subsection}{Cone of Hermitian forms over $\CC$}
Every Hermitian form over $\CC$ can be represented by a Hermitian matrix.  Let $C$ be the cone of positive definite binary complex Hermitian forms, viewed as a subset of $V$, the $\RR$-vector space of $2\times 2$ complex Hermitian matrices.  Let $\cdot ^*$ denote complex conjugate transpose.  Then the usual action of $\GL_2(\CC)$ on $C$ given by 
\begin{equation}\label{eq:action}
(g \cdot\phi)(v) = \phi(g^* v), \quad \text{where $g \in \GL_2(\CC)$ and $\phi\in C$.}\end{equation}
Equivalently, if $A$ is the Hermitian matrix representing $\phi$, then $g \cdot \phi = g Ag^*$.  In particular a coset $gU(2) \in \GL_2(\CC)/U(2)$ can be viewed as the Hermitian form associated to the matrix $gg^*$.
\end{subsection}

\begin{subsection}{Cone of Hermitian forms over $F$}\label{sec:binaryform}
Let $\V = V \times V$, and let $\cC$ be the cone $\cC = C \times C \subset \V$.  Since $C$ is the space of positive-definite Hermitian forms on $\CC^2$, we can use $\iota$ to view $\cC$ as the space of forms on $F^2$.  Specifically, for $\phi=(\phi_1,\phi_2) \in \cC$ and $v \in F^2$, we define 
\[\phi(v)=\phi_1(\iota_1(v))+\phi_2(\iota_2(v)).\] 
\end{subsection}

\begin{subsection}{$\cC$ as a symmetric space}
The embedding $\iota$ gives an isomorphism
\begin{equation}\label{eq:Gmap}
G \to \GL_2(\CC) \times \GL_2(\CC).\end{equation}
Under this identification, $\iota(A_G) = \{(rI,rI)\;|\; r > 0\}$, where $I$ is the $2\times 2$ identity matrix.

Combining \eqref{eq:action} and \eqref{eq:Gmap}, we get an action of $G$ on $\cC$.  Let $\phi_0$ denote the binary Hermitian form represented by $I$.  Then the stabilizer on $\G$ of $\iota(\phi_0)$ is a maximal compact subgroup $K$.  The group $A_G$ acts on $\cC$ by positive real homotheties, and we have 
\[X=\cC/\RR_{>0} \simeq X_\CC \times X_\CC \times \RR, \]
where $X_\CC$ is the symmetric space for $\SL_2(\CC)$.
\end{subsection}

\begin{subsection}{$\cC$ as a self-adjoint homogeneous cone}
The cone $\cC$ is a \emph{self-adjoint homogeneous cone} \cite{A}.  Specifically, 
\begin{enumerate}
\item There is a scalar product $\ip{\cdot, \cdot}$ on $\V$ such that 
\[\cC=\left\{\phi \in \V \;|\; \ip{\phi,\psi} >0 \quad \text{for all $\psi \in \bcC \setminus \{0\}$.}\right\}\]
\item  The group $\{g \in \GL(V):g \cdot \cC = \cC\}^0$ acts transitively on $\cC$.
\end{enumerate}

Once the basepoint is fixed, we pick it to be $\iota(\phi_0)$, there is a canonical inner product making $\cC$ self-adjoint.  By viewing $\V$ as a subspace of $4\times 4$ block diagonal complex matrices, there is a natural multiplication on $\V$.  Then for $\phi,\psi \in \V$, the map $\ip{\phi,\psi}=\Tr(\phi \psi)$ defines a scalar product.  

Let $\bar{\cC}$ denote the closure of $\cC$ in $\V$.  Each vector $w \in \CC^2$ gives a rank 1 Hermitian form $w w^*$ (here $w$ is viewed as a column vector).  Combined with $\iota$, we get a map $q\colon F^2 \to \bar{\cC}$ given by 
\[q(v)=(v v^*,v' v'^*).\]

The interpretation of $\cC$ as a space of forms over $F$ is reflected in the scalar product.  Specifically, suppose $\phi \in \V(\QQ)$ and $v \in \OO^2$.  Then 
\[\ip{\phi,q(v)}=\phi(v)=\Tr_{k/\QQ}(v^*A_\phi v).\]
\end{subsection}
\end{section}
\begin{section}{\Vor polyhedron}\label{sec:vorpoly}
\begin{subsection}{Rational structure of $\V$}
Fix $\Lambda \subset \V$ to be the lattice generated by $q(v)$ for $v\in \OO^2$.  This choice of $\Lambda$ allows us to talk about rational points of $\V$.  We will refer to points of $\cC \cap \V(\QQ)$ as \emph{Hermitian forms over $F$}.

\begin{defn}
Let $\phi$ be a Hermitian form over $F$.  A $2 \times 2$ matrix $A$ with coefficients in $F$ is \emph{associated to $\phi$} if $\phi=\iota(A)$.  
\end{defn}
Since the map $\iota$ is injective, this matrix is unique and will be denoted $A_\phi$.  For $v\in F^2$, then $\phi(v)$ is just the trace.  Specifically, if $\phi \in \V(\QQ)$ with associated matrix $A_\phi$, then 
\[\phi(v)=\Tr_{k/\QQ}(v^*A_\phi v).\]   
Let $R(v)$ be the ray $\RR_{>0} \cdot q(v) \subset \bcC$.  Note that 
\[q(cv) = (\iota_1(c)^2q_1(v),\iota_2(c)^2q_2(v))\] 
so that if $c \in \QQ$, then $q(cv)\in R(v)$, and in particular
$q(-v)=q(v)$.  The set of \emph{rational boundary components}
$\cC_{1}$ of $\cC$ is the set of rays of the form $R (v)$, $v\in
F^{2}$ \cite{A}.  

\begin{defn}
The \emph{\Vor polyhedron $\Pi$} is the closed convex hull in $\bcC$
of the points $\cC_{1}\cap \Lambda \setminus \{0\}$.
\end{defn}
\end{subsection}
\begin{subsection}{\Vor decomposition}
By construction $\GL_2(\OO)$ acts on $\Pi$.  By taking the cones on
the faces of $\Pi$, one obtains a \emph{$\Gamma$-admissible
decomposition} of $\cC$ for $\Gamma =\GL_{2} (\OO)$ \cite{A}.
Essentially this means that the cones form a fan in $\bcC$ and that
there are finitely many cones modulo the action of $\GL_{2} (\OO)$.
Since the action of $\GL_{2} (\OO)$ commutes with the homotheties,
this decomposition descends to a $\GL_{2} (\OO)$-equivariant
tessellation of $X$.\footnote{If one applies this construction to
$F=\QQ$, one obtains the Farey tessellation of $\fh $, with tiles
given by the $\SL_{2} (\ZZ)$-orbit of the ideal geodesic triangle with
vertices at $0,1,\infty$.  }

We call this decomposition the \emph{\Vor decomposition}.  We call the
cones defined by the faces of $\Pi$ \emph{\Vor cones}, and we refer
to the cones corresponding to the facets of $\Pi$ as \emph{top cones}.
The sets $\sigma \cap \cC$, as $\sigma$ ranges over all top cones,
cover $\cC$.  Given a point $\phi \in \cC$, there is a finite
algorithm that computes which \Vor cone contains $\phi$ \cite{Gmod}.

For some explicit examples of the \Vor decomposition over real
quadratic fields, we refer to \cite{Ong,ants}.
\end{subsection}
\end{section}

\begin{section}{Primitivity and Minimal vectors}\label{sec:primitive}
There is another notion of minimal vectors that is explored in \cite{A,Gmod}.  In this section, we show that our notion of minimality agrees with theirs in this case by first defining primitive points for the case of interest and examining both notions of minimal vectors.  This should follow from general results of \cite{koecher}.
\begin{subsection}{Primitive points}  There are several notions of primitivity that we will need.
\begin{defn}
A vector $v=\vect{\alpha\\\beta} \in \OO^2$ is \emph{primitive} if the ideal $(\alpha,\beta)=\OO$.
\end{defn}

\begin{prop}\label{prop:torsion}
Let $u,v \in \OO^2$ be primitive vectors. Then $q(u)=q(v)$ if and only if $u=\tau v$ for some torsion unit $\tau \in \OO$.
\end{prop}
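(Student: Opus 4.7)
The plan is to dispatch the two directions separately, with the forward implication being essentially immediate. For the $(\Leftarrow)$ direction, any torsion unit $\tau \in \OO$ is a root of unity, so $\tau \bar{\tau} = 1$ and likewise $\tau' \overline{\tau'} = 1$; a direct computation then gives $(\tau v)(\tau v)^* = \tau \bar{\tau}\, v v^* = v v^*$, and similarly for the second embedding, so $q(\tau v) = q(v)$.

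For the $(\Rightarrow)$ direction, I would write $u = (a,b)^T$ and $v = (c,d)^T$ with $a,b,c,d \in \OO$ and observe that the equation $u u^* = v v^*$ in $\Mat_2(\CC)$ amounts to the entry identities $a\bar a = c\bar c$, $a\bar b = c\bar d$, $b\bar b = d\bar d$. Since $v \neq 0$ we may assume $c \neq 0$, whence $a \neq 0$. Setting $\lambda := a/c \in F$, the first identity gives $\lambda \bar{\lambda} = 1$, and the second gives $b = \lambda d$, so in fact $u = \lambda v$ as vectors in $F^2$. Applying $\iota_2$ to this identity and comparing with $u' u'^* = v' v'^*$ yields $\lambda' \overline{\lambda'} = 1$ as well, so both complex embeddings of $\lambda$ have absolute value $1$.

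It remains to show that $\lambda$ is a torsion unit. Since $v$ is primitive, there exist $x,y \in \OO$ with $xc + yd = 1$, so $\lambda = x(\lambda c) + y(\lambda d) = xa + yb \in \OO$; the symmetric argument applied to $v = \lambda^{-1} u$ and the primitivity of $u$ gives $\lambda^{-1} \in \OO$, so $\lambda \in \OO^{\times}$. All four complex embeddings of $\lambda$ have absolute value $1$, so Kronecker's theorem on algebraic integers with bounded conjugates forces $\lambda$ to be a root of unity, i.e., a torsion unit.

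The only place requiring any care is the passage from $\lambda \in F$ to $\lambda \in \OO^{\times}$, where one must use \emph{both} primitivity hypotheses: one to put $\lambda$ in $\OO$ and one to put $\lambda^{-1}$ in $\OO$. Beyond this, the argument is routine linear algebra on rank-$1$ Hermitian matrices together with a standard appeal to Kronecker's theorem.
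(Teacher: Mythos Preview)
Your proof is correct and follows essentially the same path as the paper's: from $q(u)=q(v)$ extract a scalar $\lambda\in F$ with $u=\lambda v$ and $\lambda\bar\lambda=1$, show $\lambda\in\OO$ via primitivity, and conclude that $\lambda$ is a root of unity. One small correction to your closing remark: only the primitivity of $v$ is needed---once $\lambda\in\OO$, the relation $\lambda\bar\lambda=1$ is an identity in $k$, so both complex embeddings of $\lambda$ already have absolute value $1$ and Kronecker (or the structure of $\OO^\times$) applies directly; the paper's proof indeed uses only one of the two primitivity hypotheses.
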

\begin{proof}
One direction is immediate.  Specifically, if $\tau \in \OO$ is a torsion unit, then $\tau \bar{\tau}=\tau' \bar{\tau}'=1$, and thus $q(\tau v)=q(v)$ follows from the definition of $L$.

For the converse, if $v=\vect{\alpha\\ \beta}$ then $u=\vect{\xi \alpha\\\eta \beta}$ for some $\xi,\eta \in F$.  Since $q(u)=q(v)$, 
\[\xi \bar{\xi}=\eta \bar{\eta}=1 \quad \text{and}\quad \xi\bar{\eta}=\bar{\xi}\eta=1.\]  It follows that $u=\xi v$ for some $\xi \in F$ that satisfies $\xi\bar{\xi}=1$.  Write $\xi$ as $\xi=\lambda/\mu$ for some $\lambda, \mu \in \OO$ with $(\lambda, \mu)=\OO$.  Since $\xi \alpha \in \OO$, it follows that $\mu \mid \alpha$.   Similarly, since $\xi \beta \in \OO$, it follows that $\mu \mid \beta$.  Since $v$ is primitive, $\mu$ is a unit, and hence $\xi \in \OO$. Then $\xi$ must be a torsion unit because $\xi \bar{\xi}=1$.
\end{proof}

\begin{defn}
A form $\phi \in \Lambda \cap \bar{\cC}$ is \emph{primitive form} if there exists a matrix $A=\mat{a&c\\\bar{c}&b}$ with $\gcd(a,b)=1$ and $\iota(A)=\phi$. 
\end{defn}
Note that if $\phi=\iota(A)$ is primitive, then $c \in \OO$ and $a,b\in \OO_k$ are totally positive.

\begin{lem}\label{lem:norms}
Let $a,b \in \OO_k^\times$ be totally positive with $\gcd(a,b)=1$.  Then $ab \in N_{F/k}(F)$ if and only if $a \in N_{F/k}(F)$ and $b \in N_{F/k}(F)$.
\end{lem}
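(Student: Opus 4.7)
The plan is to prove the easy direction by multiplicativity of $N_{F/k}$ and attack the nontrivial direction by splitting $\gamma$ (with $\gamma\bar\gamma = ab$) into pieces lying above $(a)$ and $(b)$ using unique factorization of ideals in $\OO$, and then absorbing the resulting unit ambiguity via the structure of totally positive units of $\OO_k$.

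If $a = N_{F/k}(\alpha)$ and $b = N_{F/k}(\beta)$, then $ab = N_{F/k}(\alpha\beta)$, settling one direction. For the converse, write $ab = \gamma\bar\gamma$ with $\gamma \in F^\times$ and consider the factorization of the fractional ideal $(\gamma)$ of $\OO$. The primes of $\OO_k$ dividing $(a)$ (respectively $(b)$) are closed under the Galois action of $F/k$, so I would define $I$ to be the product of the $\fp$-parts of $(\gamma)$ with $\fp$ above primes dividing $(a)$, and $J$ the analogous product above $(b)$. Comparing $\fp$-valuations in the identity $(\gamma)(\bar\gamma) = (ab)$, and using the crucial hypothesis $\gcd(a,b)=1$, which ensures that no prime of $\OO_k$ lies under both an $I$-factor and a $J$-factor, yields $I\bar I = (a)\OO$ and $J\bar J = (b)\OO$. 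This is the main bookkeeping step; it is conceptually straightforward once the coprimality is invoked.

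Since $\QQ(\z)$ has class number one, $I = (\alpha)$ for some $\alpha \in F^\times$, and hence $\alpha\bar\alpha = u a$ for a unit $u$. Because $\alpha\bar\alpha$ and $a$ both lie in $k$, $u$ lies in $\OO_k^\times$, and both being totally positive forces $u$ totally positive. To conclude, I would use that $\OO_k^\times = \langle -1\rangle \times \langle u_5\rangle$ and that $u_5$ has mixed sign ($u_5' = (1-\sqrt 5)/2 < 0$), so the totally positive units of $\OO_k$ are exactly the even powers $u_5^{2n}$; since $u_5 \in k$ is fixed by complex conjugation, $u_5^{2n} = N_{F/k}(u_5^n)$. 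Dividing through, $a = \alpha\bar\alpha / u = N_{F/k}(\alpha u_5^{-n}) \in N_{F/k}(F)$, and a symmetric argument delivers $b$.

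The main obstacle to worry about is the unit ambiguity when passing from the ideal identity $I\bar I = (a)\OO$ to an element-level identity $\alpha\bar\alpha = a$; this turns out to be painless because every totally positive unit of $\OO_k$ is already a norm from $\OO^\times$, so no deeper class-field-theoretic input (such as the Hasse norm theorem) is needed.
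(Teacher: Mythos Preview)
Your argument is correct. The endgame---reducing to a totally positive unit of $\OO_k$ and observing that every such unit is $u_5^{2n}=N_{F/k}(u_5^n)$---is exactly what the paper does. The difference lies in how you reach the ideal identity. The paper first reduces to the case where $m=ab$ is square-free in $\OO_k$, then uses the explicit splitting law in $F/k$ (only the ramified prime above $5$ and primes above rational $p\equiv 1\pmod 5$ can divide a square-free norm) to see that each prime of $\OO_k$ dividing $a$ or $b$ is individually a norm of a prime of $\OO$. You bypass this arithmetic entirely: by grouping the prime-power factors of the fractional ideal $(\gamma)$ according to whether the underlying prime of $\OO_k$ divides $(a)$ or $(b)$, the coprimality hypothesis alone forces $I\bar I=(a)\OO$ and $J\bar J=(b)\OO$, and class number one in $F$ gives principal generators. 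Your route is cleaner and would work verbatim for any CM extension $F/k$ with $h_F=1$ and $\OO_k^{\times,+}\subset N_{F/k}(\OO^\times)$, whereas the paper's argument leans on the specific decomposition of primes in $\QQ(\zeta_5)/\QQ(\sqrt 5)$; on the other hand, the paper's approach makes visible \emph{which} primes of $k$ are local obstructions to being a norm, information your argument does not need and does not extract.
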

\begin{proof}
If $a$ and $b$ are norms, then their product is clearly a norm.  For the other direction, suppose $m=ab=N_{F/k}(\xi)$ for some $\xi \in F$.  Since $\gcd(a,b)=1$, it suffices to consider the case where $m$ is square-free in $k$.  Factor $(\xi)$ as a product of prime ideals $(\xi)=\prod \pi$ in $F$.  Since $m$ is square-free, we must have that each $\pi$ lies over $5$ or a rational prime $p$ such that $p \equiv 1 \bmod{5}$.  It follows that $(\xi)$ has a factorization in $k$
\[(\xi)=\rho \wp_1 \wp_2 \cdots \wp_n,\] where the $\wp_i$ are prime ideals in $k$ and $\rho$ is either trivial or generated by $\sqrt{5}$.  Each of these factors has a generator which is a norm, and so it follows that $(a)$ and $(b)$ have generators which are norms.  Specifically, $e_1a$ and $e_2b$ are norms for some choice of units $e_1,e_2 \in \OO^*_k$.  The fundamental unit $u_5$ of $k$ is positive in one embedding into $\RR$ and negative in the other.  In particular since $a$, $b$, $e_1a$, and $e_2b$ are totally positive, $e_1$ and $e_2$ must be even powers of $u_5$.  It follows that $a$ and $b$ are themselves norms.
\end{proof}

\begin{prop}\label{prop:primitive}
Let $\phi \in \Lambda \cap \bar{\cC}$ be a primitive rank $1$ form.  Then there exists a primitive vector $v\in \OO^2$ such that $q(v)=\phi$. 
\end{prop}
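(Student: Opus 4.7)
Write $A_\phi=\begin{pmatrix}a&c\\\bar c&b\end{pmatrix}$ with $a,b\in\OO_k$ totally non-negative, $c\in\OO$, $\gcd(a,b)=1$ in $\OO_k$, and $ab=c\bar c$ (the rank-one condition). The goal is to produce a primitive vector $v=(\alpha,\beta)\in\OO^2$ with $\alpha\bar\alpha=a$, $\beta\bar\beta=b$, and $\alpha\bar\beta=c$; then $q(v)=\phi$ is immediate. The degenerate case $a=0$ forces $c=0$ and $b\in\OO_k^\times$ totally positive, i.e., $b=u_5^{2n}$ for some $n\in\ZZ$; since $N_{F/k}(1+\z)=u_5^2$ (using $u_5^2=u_5+1$), the vector $v=(0,(1+\z)^n)$ works, primitive because $1+\z\in\OO^\times$. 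The case $b=0$ is symmetric, and $a=b=0$ is excluded by the rank-one condition, so assume $a,b\ne 0$.

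The plan in the generic case is to construct the ideals $\mathfrak A=(\alpha)$, $\mathfrak B=(\beta)\subset\OO$ first. Define them by prescribing valuations at each prime $\mathfrak P\mid\mathfrak p$ of $\OO$:
\[
v_\mathfrak P(\mathfrak A)=\begin{cases}v_\mathfrak P(c)&\text{if }\mathfrak p\mid a,\\0&\text{otherwise,}\end{cases}\qquad v_\mathfrak P(\mathfrak B)=\begin{cases}v_{\bar{\mathfrak P}}(c)&\text{if }\mathfrak p\mid b,\\0&\text{otherwise.}\end{cases}
\]
A case analysis by the splitting behavior of $\mathfrak p$ (split, inert, ramified; only $(\sqrt 5)$ ramifies), using $c\bar c=ab$ and the coprimality of $(a),(b)$, verifies $\mathfrak A\bar{\mathfrak A}=(a)\OO$, $\mathfrak B\bar{\mathfrak B}=(b)\OO$, $\mathfrak A\bar{\mathfrak B}=(c)$, and $\mathfrak A+\mathfrak B=\OO$ (the last by disjoint supports). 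Since $\QQ(\z_5)$ has class number one, there exist $\alpha_0,\beta_0\in\OO$ generating $\mathfrak A,\mathfrak B$; these satisfy the desired relations up to units, $\alpha_0\bar\alpha_0=u_1a$, $\beta_0\bar\beta_0=u_2b$, $\alpha_0\bar\beta_0=u_3c$, with $u_1,u_2\in\OO_k^\times$ totally positive and $u_3\in\OO^\times$.

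To correct these to equalities: the totally positive units of $\OO_k$ form the group $\langle u_5^2\rangle$, which is contained in $N_{F/k}(\OO^\times)$ via $N_{F/k}(1+\z)=u_5^2$, so one may pick $w_1,w_2\in\OO^\times$ with $w_i\bar w_i=u_i$. Setting $\alpha=\alpha_0/w_1$, $\beta=\beta_0/w_2$ gives $\alpha\bar\alpha=a$ and $\beta\bar\beta=b$. Taking relative norms in $\alpha_0\bar\beta_0=u_3c$ forces $u_1u_2=u_3\bar u_3$, so $u_3/(w_1\bar w_2)$ has relative norm one and is therefore a torsion unit $\tau$; replacing $w_1$ by $\tau w_1$ (which preserves $w_1\bar w_1$) finally arranges $\alpha\bar\beta=c$. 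Then $(\alpha)+(\beta)=\mathfrak A+\mathfrak B=\OO$ makes $v=(\alpha,\beta)$ primitive, and $q(v)=\phi$. The technical heart of the argument is this final unit adjustment: checking that $N_{F/k}(\OO^\times)$ covers all totally positive units of $\OO_k$, and that the cross-term equality can then be patched by a torsion unit; the ideal-theoretic input (Step 2) is formally routine but requires careful bookkeeping by the splitting type of each $\mathfrak p$.
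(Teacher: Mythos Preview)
Your proof is correct, but it takes a different route from the paper's. The paper first isolates the arithmetic content in a separate lemma (Lemma~\ref{lem:norms}): if $a,b\in\OO_k$ are totally positive and coprime with $ab\in N_{F/k}(F)$, then each of $a$ and $b$ individually lies in $N_{F/k}(F)$. From $ab=c\bar c$ it then obtains $\alpha,\beta$ with $N_{F/k}(\alpha)=a$ and $N_{F/k}(\beta)=b$, and asserts (rather briskly) that $v=(\alpha,\beta)^t$ is the desired primitive vector. Your argument bypasses this norm lemma entirely: you build the ideals $(\alpha)$ and $(\beta)$ directly by distributing the prime valuations of $c$ according to whether the underlying prime of $k$ divides $a$ or $b$, invoke class number one to produce generators, and then carry out the unit adjustment explicitly---including the cross-term $\alpha\bar\beta=c$ and the degenerate cases $a=0$ or $b=0$, both of which the paper leaves implicit. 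The paper's approach packages the key arithmetic into a reusable norm statement; yours is more self-contained and makes every step (integrality of $\alpha,\beta$, primitivity of $v$, and the off-diagonal entry of $vv^*$) fully visible. The underlying ingredients are the same in both: the splitting behavior of primes in $F/k$, $h_F=1$, and the fact that every totally positive unit of $\OO_k$ is a relative norm from $\OO^\times$.
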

\begin{proof}
Since $\phi$ is primitive, there exists a matrix $A=\mat{a&c\\\bar{c}&b}$ with $\gcd(a,b)=1$ and $\iota(A)=\phi$.  Since $\phi$ is rank $1$, $c\bar{c}=N_{F/k}(c)=ab$.  By Lemma~\ref{lem:norms}, there exist $\alpha,\beta \in F$ with $N_{F/k}(\alpha)=a$ and $N_{F/k}(\beta)=b$.  Since $\gcd(a,b)=1$, it follows that $v=(\alpha,\beta)^t$ is a primitive vector with $q(v)=\phi$ as desired.
\end{proof}

\begin{remark}
Since $\Lambda$ is a lattice, there is another notion of primitivity for $\phi \in \Lambda$.  Specifically, $\phi$ is \emph{primitive in $\Lambda$} if $\phi$ can be extended to a basis of the lattice.  Note that Proposition~\ref{prop:primitive} shows that primitive rank $1$ forms correspond to primitive vectors in $\OO^2$ and primitive vectors in $\OO^2$ give rise to primitive rank $1$ forms.  However, while primitive rank $1$ forms are primitive in $\Lambda$, there are many vectors in primitive in $\Lambda$ that are not primitive as forms.

Even when we restrict ourselves to considering vectors that are primitive in $\Lambda$ and rank $1$, if we relax the condition of $\gcd(a,b)=1$, we can no longer guarantee that this vector comes from a primitive vector in $\OO^2$.  For example, $4+u_5$ is totally positive, but there does not exist $\alpha \in F$ such that $\alpha \bar{\alpha}=4+u_5$.  Thus the point $\iota\left(\mat{4+u_5&0\\0&0}\right)$ is not in $q(\OO^2)$.  However, we do have the following result, which basically says that although there are rank 1 rational forms which are not in the image of $\OO^2$, they are contained in $\OO_k \cdot q(\OO^2)$.     
\end{remark}
\begin{prop}\label{prop:rank1}
Let $\phi \in \Lambda' \cap \cC_1$ be a rank $1$ form.  Then there exists $\alpha \in \OO_k$  totally positive and a primitive vector $v\in \OO^2$ such that $\alpha \cdot q(v)=\phi$. 
\end{prop}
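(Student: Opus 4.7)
The plan is to realize $A_\phi$ explicitly as an outer product $\alpha\cdot v v^*$ with $v\in\OO^2$ primitive and $\alpha\in\OO_k$ totally positive, by stripping a common factor from the first column of $A_\phi$.

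Write $A_\phi = \mat{a & c \\ \bar c & b}$ with $a,b\in\OO_k$ totally non-negative, $c\in\OO$, and $ab = c\bar c$. If $a = 0$, the rank one condition forces $c = 0$, so $A_\phi = \mat{0 & 0 \\ 0 & b}$ and the result is immediate with $\alpha = b$ and $v = (0,1)^t$. Otherwise $a$ is totally positive, and a direct computation using $b = c\bar c/a$ gives
\[
A_\phi = \frac{1}{a}\, w w^*, \qquad w = \vect{a \\ \bar c} \in \OO^2.
\]
Since $F = \QQ(\z_5)$ has class number one, the ideal $(a,\bar c) \subset \OO$ is principal; let $d \in \OO$ be a generator and write $w = d\,v$ with $v = (a/d, \bar c/d)^t \in \OO^2$. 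The entries of $v$ generate $\OO$ (since $d$ generates $(a,\bar c)$), so $v$ is primitive. Setting $\alpha = d\bar d / a = \Norm_{F/k}(d)/a$, the displayed identity yields $A_\phi = \alpha\, v v^*$, that is $\phi = \alpha\, q(v)$.

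The substantive content is the verification that $\alpha$ lies in $\OO_k$ and is totally positive. Total positivity is immediate since both $d\bar d$ and $a$ are totally positive. Integrality reduces to the inequality $v_\wp(d\bar d) \geq v_\wp(a)$ for every prime $\wp$ of $\OO_k$, which I regard as the main obstacle. I would handle this prime by prime: from $d = \gcd(a,\bar c)$ one has $v_\fp(d) = \min\bigl(v_\fp(a),v_{\bar\fp}(c)\bigr)$ at each prime $\fp$ of $\OO$ above $\wp$, and the rank one identity $c\bar c = ab$ gives $v_\fp(c) + v_{\bar\fp}(c) = v_\fp(a) + v_\fp(b)$. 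The delicate case is when $\wp = \fp\bar\fp$ splits in $F/k$, where $v_\fp(d)$ and $v_{\bar\fp}(d)$ need not agree; a short case analysis on the relative sizes of $v_\wp(a), v_\wp(b), v_\fp(c), v_{\bar\fp}(c)$ (all satisfying $v_\fp(c) + v_{\bar\fp}(c) = v_\wp(a)+v_\wp(b)$) handles all four subcases. When $\wp$ is inert or ramified one has $\fp = \bar\fp$, and the verification becomes routine.
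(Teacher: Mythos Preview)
Your argument is correct, and the valuation check in the split case goes through exactly as you sketch: writing $A=v_\wp(a)$, $B=v_\wp(b)$, $C_1=v_\fp(c)$, $C_2=v_{\bar\fp}(c)$ with $C_1+C_2=A+B$, one needs $\min(A,C_1)+\min(A,C_2)\geq A$, and each of the four subcases is immediate (the worst case $C_1,C_2<A$ gives $C_1+C_2=A+B\geq A$). The inert and ramified cases are indeed routine.

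Your route differs genuinely from the paper's. You factor the \emph{column} $(a,\bar c)^t$ by its content $d=\gcd(a,\bar c)$ in $\OO$, obtaining the primitive vector $v$ directly, and then must check that $\alpha=\Norm_{F/k}(d)/a$ is an $\OO_k$-integer via valuations. The paper instead factors the \emph{matrix} by $\alpha=\gcd(a,b)$ taken in $\OO_k$ (chosen totally positive), observes that $\alpha^2\mid c\bar c$ forces $\alpha\mid c$, and thereby reduces to the coprime case $\gcd(a,b)=1$; it then invokes Proposition~\ref{prop:primitive}, whose substance is Lemma~\ref{lem:norms} (if $ab$ is a norm from $F$ and $\gcd(a,b)=1$ then $a$ and $b$ are each norms). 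Your approach is more constructive and sidesteps the norm lemma entirely, at the cost of the prime-by-prime bookkeeping; the paper's approach is structurally cleaner and isolates the arithmetic content in a reusable lemma about norms in $F/k$, but is less explicit about what $v$ actually is.
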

\begin{proof}
Write $\phi$ as $\phi=\iota(A)$, where $A=\mat{a&c\\\bar{c}&b}$ for some $a,b \in \OO_k$ totally positive and $c \in \OO$.  If $\phi$ is primitive, the result follows from Proposition~\ref{prop:primitive} with $\alpha =1$. If $\phi$ is not primitive, there exists an $\alpha \in \OO_k$ totally positive such that $\alpha \mid a$ and $\alpha \mid b$.  Since $\phi$ is rank $1$, it follows that $c \bar{c}=ab$.  In particular $\alpha^2 \mid c\bar{c}$.  Since $\alpha \in \OO_k$, we have that $\alpha \mid c$ and $\alpha \mid \bar{c}$.  Thus we have $\phi=\iota(\alpha A_0 )$, where $A_0 = \alpha^{-1}A$.  Since $A_0$ corresponds to a primitive rank $1$ form, Proposition~\ref{prop:primitive} implies that there exists a primitive vector $v \in \OO^2$ such that $q(v)=\iota(A_0)$.  The result follows.
\end{proof}

\begin{prop}\label{prop:positive}
Let $b \in \OO_k^+$.  Then there exists an $\alpha \in \OO\setminus \{0\}$ and $t \in \OO_k^+ \cup \{0\}$ such that 
\[b=\alpha \bar{\alpha} +t.\]
\end{prop}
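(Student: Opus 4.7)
The plan is to take $\alpha$ to be a power of the fundamental unit $u_5$, regarded as a unit of $\OO$. Since $u_5$ lies in the totally real subfield $k$, complex conjugation fixes it, so $\alpha = u_5^m$ gives $\alpha\bar\alpha = u_5^{2m}$, and the statement reduces to finding an integer $m$ for which $b - u_5^{2m} \in \OO_k^+ \cup \{0\}$.

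Writing $b_1, b_2$ for the real embeddings of $b$ and using $u_5' = -u_5^{-1}$ (so that $u_5^{2m}$ has embeddings $(u_5^{2m}, u_5^{-2m})$), the total-non-negativity of $b - u_5^{2m}$ is equivalent to the pair of inequalities $u_5^{2m} \le b_1$ and $u_5^{-2m} \le b_2$. Taking logarithms, this is exactly the requirement that $m$ lie in the closed interval
\[
J = \left[-\frac{\log b_2}{2\log u_5},\ \frac{\log b_1}{2\log u_5}\right] \subset \RR,
\]
whose length is $\log N_{k/\QQ}(b) / (2\log u_5)$.

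The main obstacle, and the heart of the argument, is showing that $J$ always contains an integer. I would split on the value of $N_{k/\QQ}(b) \in \ZZ_{\ge 1}$. When $N_{k/\QQ}(b) = 1$, the element $b$ is a totally positive unit of $\OO_k$ and therefore equals $u_5^{2m}$ for a unique $m \in \ZZ$, which is precisely the single point of $J$. When $N_{k/\QQ}(b) \ge 2$, I would first rule out $N_{k/\QQ}(b) \in \{2, 3\}$ by observing that both $2$ and $3$ are inert in $k$ (the minimal polynomial $x^2 - x - 1$ of $u_5$ is irreducible modulo $2$ and modulo $3$), so no element of $\OO_k$ has norm $2$ or $3$. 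Hence $N_{k/\QQ}(b) \ge 4$, which forces $|J| \ge \log 4/(2\log u_5) > 1$, and the closed interval $J$ then necessarily contains an integer. Given such an $m$, setting $\alpha = u_5^m \in \OO^\times \setminus \{0\}$ and $t = b - u_5^{2m} \in \OO_k^+ \cup \{0\}$ produces the desired decomposition; the crucial arithmetic input is the inertness of $2$ and $3$ in $k = \QQ(\sqrt 5)$, which eliminates exactly the small-norm pathologies that would otherwise leave $J$ too short to guarantee an integer.
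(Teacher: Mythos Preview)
Your proof is correct. Both you and the paper take $\alpha$ to be a power of the fundamental unit $u_5$, so that $\alpha\bar\alpha$ is a power of $\eta=u_5^2$; the difference lies in how one checks that some power of $\eta$ is dominated by $b$. The paper argues geometrically: since multiplication by $\eta$ preserves both $\OO_k^+$ and $N_{F/k}(\OO)$, it reduces to a fundamental domain for the $\eta$-action on the totally positive cone and then observes that all but the two small elements $1$ and $2$ in that domain dominate a fixed power of $\eta$. You instead argue analytically, translating the domination condition into the existence of an integer in the interval $J$ and using the inertness of $2$ and $3$ in $k=\QQ(\sqrt{5})$ to force $N_{k/\QQ}(b)\ge 4$ and hence $|J|>1$. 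Your packaging has the virtue of making the arithmetic input completely explicit (the inertness of $2$ and $3$ is exactly what underlies the paper's ``except for $1$ and $2$''), while the paper's fundamental-domain viewpoint fits more naturally into the reduction-theory framework used elsewhere. One small point worth noting in your write-up: when $m$ lands on an endpoint of $J$, one embedding of $t=b-u_5^{2m}$ vanishes, and you should observe (as you implicitly do in the $N(b)=1$ case) that the irrationality of $u_5$ then forces $b=u_5^{2m}$ exactly, so $t=0$ rather than a nonzero element that fails to be totally positive.
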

\begin{proof}
The square of the fundamental unit $\eta = u_5^2 \in \OO_k$ is totally positive.  Since $\eta=u_5 \bar{u}_5$, multiplication by $\eta$ acts on the $\OO^+$ and preserves $N_{F/k}(\OO)$.  In particular it suffices to show the result for a fundamental domain for the action of $\eta$ on $\OO^+$.  Once can take the positive cone spanned by $1$ and $\eta^2$.  It is clear that every point in the cone has the form $\eta^2+t$ for some $t\in \OO_k^+$ except for $1$ and $2$.  The condition is trivially satisfied for $1$ and $2$.  
\end{proof}
\end{subsection}
\begin{subsection}{Minimal vectors}
There is another notion of minimal vectors and perfect forms described in \cite{Gmod}.  Specifically one can define 
\[\hat{m}(\phi) = \inf_{\psi \in \Lambda' \cap \cC_1 } \ip{\phi,\psi}\] and 
\[\hat{M}(\phi) = \{\psi \in \Lambda' \cap \cC_1 \;|\;\ip{\phi, \psi} = 1\}.\]
Notice that if $\psi \in \hat{M}(\phi)$, then $\psi$ is primitive in $\Lambda$. It is clear that  $\hat{m}(\phi) \leq m(\phi)$.  If $\hat{m}(\phi) = m(\phi)$, then 
\[\{q(v)\;|\; v \in M(\phi)\} \subseteq \hat{M}(\phi).\]

\begin{prop}\label{prop:mhat}
Let $\phi \in \cC$.  Then 
\[\hat{m}(\phi)=\min_{\substack{b \in \OO_k\\b \gg 0}} m(b \cdot \phi)\]
\end{prop}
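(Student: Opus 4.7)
The plan is to rewrite both sides as infima of $\langle \phi, \cdot\rangle$ over suitable subsets of $\bcC \cap \Lambda$ and show these subsets yield the same value. The key computational identity is
\[
\langle \phi, \alpha q(v)\rangle = (\alpha\phi)(v), \qquad \alpha \in \OO_k,\ v \in \OO^2,
\]
which follows by direct expansion of the trace pairing on $\V = V \times V$ together with $\alpha q(v) = (\iota_1(\alpha)\, vv^*,\, \iota_2(\alpha)\, v'v'^*)$.

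For the inequality $\hat m(\phi) \geq \min_{b \gg 0} m(b\phi)$, I would use that by Proposition~\ref{prop:rank1} every $\psi \in \Lambda' \cap \cC_1$ has the form $\psi = \alpha q(v)$ with $\alpha \in \OO_k$ totally positive and $v \in \OO^2$ primitive; the key identity then yields
\[
\langle \phi, \psi\rangle = (\alpha\phi)(v) \geq m(\alpha\phi) \geq \min_{b \gg 0} m(b\phi),
\]
and passing to the infimum over $\psi$ finishes this direction.

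For the reverse inequality, I would fix $b \in \OO_k^+$ and $v \in \OO^2 \setminus \{0\}$ and show $(b\phi)(v) \geq \hat m(\phi)$. Via the key identity this reduces to checking that $bq(v) \in \Lambda$: once this is known, writing $bq(v) = n\psi_0$ with $n \in \ZZ_{>0}$ the content and $\psi_0 \in \Lambda' \cap \cC_1$ primitive gives $(b\phi)(v) = n\langle \phi, \psi_0\rangle \geq \hat m(\phi)$, whence $m(b\phi) \geq \hat m(\phi)$ for every $b$. The claim $bq(v) \in \Lambda$ is established by induction on $\Tr_{k/\QQ}(b) \in \ZZ_{\geq 2}$ (the base case $\Tr(b) = 2$ forces $b = 1$): Proposition~\ref{prop:positive} decomposes $b = \beta\bar\beta + t$ with $\beta \in \OO \setminus \{0\}$ and $t \in \OO_k^+ \cup \{0\}$, so $bq(v) = q(\beta v) + tq(v)$, the first summand lies in $\Lambda$ by definition, while the second falls under the inductive hypothesis since $\Tr_{k/\QQ}(\beta\bar\beta) \geq 2$ for nonzero $\beta \in \OO$. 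This induction is the chief technical obstacle; the remainder is routine, with the attainment of both sides coming from discreteness of $\Lambda$ and compactness of the sublevel sets of $\langle \phi, \cdot\rangle$ on $\bcC$.
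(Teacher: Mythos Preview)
Your argument is correct, and the forward inequality via Proposition~\ref{prop:rank1} together with the identity $\langle\phi,\alpha\,q(v)\rangle=(\alpha\phi)(v)$ is exactly what the paper does.

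For the reverse inequality, however, you work much harder than the paper. The paper's proof is a single chain of equalities: Proposition~\ref{prop:rank1} gives one inclusion, and the other inclusion $\{b\cdot q(v):b\in\OO_k,\ b\gg0,\ v\in\OO^2\}\subseteq \Lambda'\cap\cC_1$ is immediate, since for $b\in\OO_k$ and $v\in\OO^2$ the Hermitian matrix $b\,vv^*$ visibly has entries in $\OO$ with diagonal in $\OO_k$. Thus $\Lambda'\cap\cC_1$ is \emph{parametrized} by such pairs $(b,v)$, and the key identity converts $\hat m(\phi)$ directly into $\inf_{b\gg0}\inf_v (b\phi)(v)=\inf_{b\gg0} m(b\phi)$, giving both inequalities at once.

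Your induction on $\Tr_{k/\QQ}(b)$ using Proposition~\ref{prop:positive} establishes the stronger statement that $b\cdot q(v)$ lies in the \emph{smaller} lattice $\Lambda=\ZZ\text{-span}\{q(w):w\in\OO^2\}$. That is not needed here; it is precisely the mechanism of the \emph{next} result, Proposition~\ref{prop:msame}, where the decomposition $b=\alpha\bar\alpha+t$ is used to prove $m(\phi)\le m(b\phi)$ and hence $\hat m(\phi)=m(\phi)$. So your proof is valid but folds the content of Proposition~\ref{prop:msame} into Proposition~\ref{prop:mhat}, at the cost of obscuring that the present statement is purely a formal reparametrization of the infimum.
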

\begin{proof}
 For $\psi \in \Lambda' \cap \cC_1$, there exists $b \in \OO_k$, $b \gg0 $ and $v \in \OO^2$ such that $\psi = b \cdot q(v)$ by Proposition~\ref{prop:rank1}.  It follows that 
\begin{align*}
\ip{\phi,\psi}=\ip{\phi,b\cdot q(v)}=\Tr_{k/\QQ}(A_\phi b v v^*)=\Tr_{k/\QQ}(v^*bA_\phi v).
\end{align*}
Then 
\begin{align*}
\hat{m}(\phi)=\inf_{\psi \in \Lambda' \cap \cC_1 } \ip{\phi,\psi}=\inf_{\substack{v \in \OO^2\\b \in \OO_k,b \gg0} } \Tr_{k/\QQ}(v^*bA_\phi v)=\inf_{\substack{b \in \OO_k,b \gg0}} m(b \cdot \phi).
\end{align*}
\end{proof}

\begin{prop}\label{prop:msame}
Let $\phi  \in \cC$.  Then $ m(\phi) = \hat{m}(\phi)$.
\end{prop}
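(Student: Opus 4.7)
The plan is to establish only the nontrivial inequality $m(\phi) \leq \hat{m}(\phi)$, since the reverse is noted in the discussion preceding Proposition \ref{prop:mhat}. By that proposition, $\hat{m}(\phi) = \min_{b \in \OO_k,\, b \gg 0} m(b \cdot \phi)$, so it suffices to show that $m(b \cdot \phi) \geq m(\phi)$ for every totally positive $b \in \OO_k$.

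The key input is Proposition \ref{prop:positive}, which writes such a $b$ as $b = \alpha \bar{\alpha} + t$ with $\alpha \in \OO \setminus \{0\}$ and $t \in \OO_k^+ \cup \{0\}$. For any $v \in \OO^2 \setminus \{0\}$ I would split
\[(b \cdot \phi)(v) = ((\alpha \bar{\alpha}) \cdot \phi)(v) + (t \cdot \phi)(v).\]
The first summand I would identify with $\phi(\alpha v)$: since $\iota_i(\alpha \bar{\alpha}) = |\iota_i(\alpha)|^{2}$ for $i = 1,2$, the scalar can be pulled inside each component form, giving $((\alpha\bar\alpha)\cdot\phi)(v) = \sum_i |\iota_i(\alpha)|^2 \phi_i(\iota_i(v)) = \sum_i \phi_i(\iota_i(\alpha v)) = \phi(\alpha v)$. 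Because $\alpha v \in \OO^2 \setminus \{0\}$, this summand is at least $m(\phi)$. The second summand is nonnegative: each $\iota_i(t) \geq 0$ since $t \in \OO_k^+ \cup \{0\}$, and each $\phi_i(\iota_i(v)) \geq 0$ since $\phi \in \cC$ is positive definite.

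Combining these bounds gives $(b \cdot \phi)(v) \geq m(\phi)$ for every nonzero $v \in \OO^2$, hence $m(b \cdot \phi) \geq m(\phi)$, completing the argument. The substantive content has already been packaged into Propositions \ref{prop:mhat} and \ref{prop:positive}; the only remaining bit requiring verification is the identity $((\alpha \bar{\alpha}) \cdot \phi)(v) = \phi(\alpha v)$, which I expect to be the sole point of friction but which follows immediately from the definition of the $G$-action on $\cC$ together with the fact that norms from $F$ to $k$ transport to squared moduli under each embedding $\iota_i$.
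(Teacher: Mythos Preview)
Your proof is correct and follows essentially the same route as the paper's: reduce via Proposition~\ref{prop:mhat} to showing $m(b\cdot\phi)\ge m(\phi)$ for all totally positive $b\in\OO_k$, invoke Proposition~\ref{prop:positive} to write $b=\alpha\bar\alpha+t$, and then split the value of $(b\cdot\phi)(v)$ into the term $\phi(\alpha v)\ge m(\phi)$ plus a nonnegative remainder. The paper phrases the same computation in terms of $\Tr_{k/\QQ}(v^* b A_\phi v)$, but the argument is identical.
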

\begin{proof}
By Proposition~~\ref{prop:mhat}, 
\[\hat{m}(\phi) = \inf_{b \in \OO_k, b\gg0}m(b \cdot \phi).\]
In particular $\hat{m}(\phi) \leq m(\phi) $, and to prove the result it suffices to show that 
\begin{equation}
m(\phi) \leq m(b \cdot \phi) \quad \text{for every $b \in \OO_k, b\gg0$.}
\end{equation}
By Proposition~\ref{prop:positive}, there exists an $\alpha \in \OO$ and $t\in \OO_k, t\gg0$ such that $b=\alpha \bar{\alpha} +t$.  We compute 
\begin{align*}
\Tr_{k/\QQ}(v^* bA_\phi v)&= \Tr_{k/\QQ}(v^* (\alpha \bar{\alpha}+t)A_\phi v)\\
&=\Tr_{k/\QQ}((\alpha v)^* A_\phi (\alpha v))+\Tr_{k/\QQ}(tv^*A_\phi v).
\end{align*}
We have that $\Tr_{k/\QQ}((\alpha v)^* A_\phi (\alpha v)) \geq m(\phi)$.  Furthermore $\Tr_{k/\QQ}(tv^*A_\phi v) >0$ since $t$ and $v^*A_\phi v$ are both totally positive.  The result follows.
\end{proof}
\end{subsection}
\end{section}

\begin{section}{\Vor cones for $F$}\label{sec:voronoi}
In this section, we describe the $\GL_2(\OO)$ conjugacy classes of \Vor cones.  We implement the method described in \cite{Gmod}.
\subsection{Perfect forms}
We find one perfect form $\phi$, with associated matrix  
\[A_\phi=\frac{1}{5}\mat{
\z^3 + \z^2 + 3 & \z^3 - \z^2 + \z - 1 \\
-2\z^3 - \z - 2 & \z^3 + \z^2 + 3
}\]
This is done using \verb+MAGMA+ \cite{magma} as follows.
\begin{enumerate}
\item Fix a list $L \subset \OO^2$ of vectors large enough so that the conditions \[\{\phi(v)=1\;|\;v \in L\}\] has a unique solution. \label{it:1}
\item Ensure that $\phi$ is positive definite.\label{it:2}
\item Check that $L\subseteq M(\phi)$.\label{it:3}
\end{enumerate}
Step \eqref{it:1} is accomplished by including more vectors into the list $L$ until the linear system has a unique solution. Steps \eqref{it:2} and \eqref{it:3} are accomplished by picking a $\ZZ$-basis for $\OO^2$ and expressing $\phi$ as a quadratic form on $\ZZ^8$.   

The perfect form $\phi$ has $240$ minimal vectors.  It is clear that if $v \in M(\phi)$ then $\tau v \in M(\phi)$ for any torsion unit $\tau \in \OO$.  There are $24$ minimal vectors (modulo torsion units).  
Let $\omega$ denote the unit $\omega = \z+\z^2$.  Then the form $\phi$ has (modulo torsion units) minimal vectors
\begin{multline}\label{eq:perfectmin} 
\vect{
            -\z + 1 \\ \z^3 + 1
            },
            \vect{
            -\z^3+1 \\ 1
            },
            \vect{
            1 \\ -\omega
            },
            \vect{
            1 \\ -\z^2
            },
            \vect{
            1 \\ 0
            },
            \vect{
            1 \\ \z^3
            },
            \vect{
            1 \\ -\z^2 + 1
            },
            \vect{
            1 \\ 1
            },
            \vect{
            1 \\ \z^3 + 1
            },\\
            \vect{
            1 \\ \z + 1
            },
            \vect{
            1 \\ \z^3 + \z + 1
            },
            \vect{
            1 \\ -\z^4
            },
            \vect{
            \omega^{-1} \\ \z^4
            },
            \vect{
            \omega^{-1} \\ \z^4-1
            },
            \vect{
            \omega^{-1} \\ -1
            },
            \vect{
            \omega^{-1} \\ -\z^3 - 1
            },
\\
            \vect{
            \omega^{-1} \\ -\z^3 - \z^2 - 1
            },
            \vect{
            \omega \\ \omega + 1
            },
            \vect{
            \omega \\ -\z^3
            },
            \vect{
            \omega \\ 0
            },
            \vect{
            \omega \\ \z^2
            },
            \vect{
            \omega \\ \omega
            },
            \vect{
            0 \\ 1
            },
            \vect{
            0 \\ \omega
            }
. 
\end{multline}%
By Proposition~\ref{prop:torsion}, these give rise to $24$ distinct points in $\bcC$.  These $24$ points are vertices of a top-dimensional \Vor cone.  We show in \S~\ref{sec:topcone} that this is the only top-dimensional \Vor cone modulo $\GL_2(\OO)$. 
\begin{prop}
There is $1$ $\GL_2(\OO)$-conjugacy class of $8$-dimensional cone.  The corresponding perfect form $\phi$ has (modulo torsion units) $24$ minimal vectors.
\end{prop}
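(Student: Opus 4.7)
The plan splits into two parts. The second sentence follows by inspection of the list in~\eqref{eq:perfectmin}: by Proposition~\ref{prop:torsion}, two primitive vectors $u,v \in \OO^2$ give the same ray $R(v)$ precisely when $u = \tau v$ for a torsion unit $\tau \in \OO$, so the 24 listed vectors represent 24 distinct rays once one verifies pairwise that no ratio is a tenth root of unity. Multiplying by the 10 torsion units of $\OO$ then recovers the 240 minimal vectors counted earlier. Completeness of the list (no missed minimal vectors) is built into the construction of $\phi$: in the MAGMA pass of steps~\eqref{it:1}--\eqref{it:3}, after expressing $\pphi$ as a quaternary quadratic form on $\ZZ^8$, one enumerates all $v \in \OO^2$ with $\pphi(v) \leq m(\phi)$ using a short-vector routine on the associated ellipsoid.

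The substantive content is the uniqueness of the $\GL_2(\OO)$-orbit, for which I would apply the Voronoi neighbor algorithm of \cite{Gmod,koecher}. The top-dimensional cone $\sigma(\phi) = \mathrm{cone}\{q(v) : v \in M(\phi)\}$ sits in the $8$-dimensional real vector space $\V$ and has 24 extreme rays, so its facet structure is accessible via a polyhedral convex-hull routine; each facet $F$ is cut out by a hyperplane $\{\ip{\psi_F,\cdot} = 0\}$ with $\psi_F \in \V$ normalized so that $\sigma(\phi)$ lies in the non-negative half-space. For each facet $F$ I would then compute the Voronoi neighbor $\phi'_F$ by following the half-line $\phi - t\,\psi_F$ and recording the smallest $t^* > 0$ at which some new primitive rank-$1$ form $q(v)$, not already in $F$, attains the minimum; the limiting form $\phi'_F$ is perfect and its top cone shares the facet $F$ with $\sigma(\phi)$.

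The final step is, for every facet $F$, to certify that $\phi'_F$ is $\GL_2(\OO)$-conjugate to $\phi$ by producing an explicit $g \in \GL_2(\OO)$ with $g \cdot \phi'_F = \phi$. Such a $g$ is determined by its action on any fixed $\OO$-basis of $\OO^2$, so the search reduces to trying ordered pairs drawn from $M(\phi'_F)$ as images of that basis and checking linearly that the resulting matrix lies in $\GL_2(\OO)$ and satisfies $g \cdot \phi'_F = \phi$. If every neighbor of $\sigma(\phi)$ turns out to be equivalent to $\phi$, then the Voronoi graph on $\GL_2(\OO)$-orbits of top cones is a singleton, and the uniqueness claim follows.

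The principal obstacle is the equivalence step: it must be repeated across every facet of $\sigma(\phi)$ --- potentially many --- and a priori requires a brute-force search in $\GL_2(\OO)$. Pruning is essential; I would first compare combinatorial invariants (the multiset $\{\pphi(v) : v \in M(\phi'_F)\}$ and the face lattice of $\sigma(\phi'_F)$) against those of $\phi$ to narrow the candidate set before attempting any explicit matrix, and only then fall back to the basis-image enumeration. Organizing this bookkeeping cleanly in MAGMA is the crux of the calculation.
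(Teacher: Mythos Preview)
Your proposal is correct in outline and follows the standard \Vor neighbor algorithm, but the paper organizes the computation more efficiently in two ways that you should be aware of.

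First, rather than treating all facets of $\sigma(\phi)$ independently, the paper computes the stabilizer $S_\phi \subset \GL_2(\OO)$ of $\phi$ (a group of order $600$) and reduces the $118$ codimension-$1$ faces to $9$ orbits under $S_\phi$ before doing anything else. This cuts the number of neighbor computations from $118$ to at most $9$. Your ``pruning'' step using combinatorial invariants is a reasonable heuristic for matching forms, but it does not exploit the symmetry of $\sigma(\phi)$ itself, which is where most of the redundancy lies.

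Second, and more interestingly, the paper largely avoids your explicit neighbor computation (walking the half-line $\phi - t\psi_F$) via Lemma~\ref{lem:face}: if two facets $F,F'$ of $\cC_\phi$ are related by some $\gamma \in \GL_2(\OO) \setminus S_\phi$, then the neighboring top cone across $F'$ must be $\gamma \cdot \cC_\phi$, so both neighbors $\psi$ and $\psi'$ are automatically conjugate to $\phi$. Thus one searches for face-to-face identifications within $\cC_\phi$ rather than computing each neighbor and then searching for an equivalence back to $\phi$. Your approach would reach the same conclusion, but with more work per facet.
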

\subsection{Top cones}\label{sec:topcone}
The perfect forms correspond to 8-dimensional \Vor cones.  Specifically, each top cone corresponds to a facet $F$ of the \Vor polyhedron.  There is a unique point $\phi_F \in \cC \cap \V(\QQ)$ \cite{Gmod} such that \begin{enumerate}
\item $F=\{x \in \Pi \; |\; \ip{x,\phi_F}=1\}$, and
\item for all $x \in \Pi \setminus F$, we have $\ip{x, \phi_F} >1$.
\end{enumerate}
Let $Z_F$ be the \emph{minimal vertices of $\phi_F$}, the finite set of points $x \in \Lambda' \cap \cC_1$ such that $\ip{x, \phi_F}=1$.  Then $F$ is the convex hull of $Z_F$.  The form $\phi_F$ is a perfect form, and 
\[\{q(v):v \in M(\phi_F)\} = Z_F.\]

For $\gamma \in \GL_2(\OO)$, let $\Theta \gamma = (\gamma^*)^{-1}$.  The action of $\GL_2(\OO)$ on perfect forms, minimal vectors, and minimal vertices are related by the following.
\begin{prop}
Let $F$ be a top cone and let $\phi_F$ be the corresponding perfect form with minimal vectors $M(\phi_F)$ and minimal vertices $Z_F$.  For $\gamma \in \GL_2(\OO)$, 
\begin{enumerate}
\item $\Theta \gamma \cdot \phi = \phi_{\gamma \cdot F}$, and 
\item $M(\Theta \gamma \cdot \phi) = \gamma \cdot M(\phi)$.
\end{enumerate}
In particular, two perfect forms are $\GL_2(\OO)$-conjugate if and only if their minimal vectors or minimal vertices are $\GL_2(\OO)$-conjugate.
\end{prop}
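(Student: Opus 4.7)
The plan is to prove (ii) directly from the definition of the group action, then derive an equivariance of the pairing $\ip{\cdot,\cdot}$ which yields (i), and finally combine the two with the fact that a top facet equals the convex hull of its minimal vertices to obtain the ``in particular'' statement.

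For (ii), I would compute $(\Theta\gamma \cdot \phi)(v) = \phi((\Theta\gamma)^{*}v) = \phi(\gamma^{-1}v)$, using $(\Theta\gamma)^{*} = \gamma^{-1}$. Since $\gamma^{-1} \in \GL_2(\OO)$ permutes $\OO^2 \setminus \{0\}$, the infimum is unchanged, so $m(\Theta\gamma \cdot \phi) = m(\phi)$, and $v$ realizes the minimum of $\Theta\gamma \cdot \phi$ precisely when $\gamma^{-1}v$ realizes that of $\phi$, i.e.\ when $v \in \gamma \cdot M(\phi)$.

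For (i), the key intermediate step is the equivariance
\[
\ip{\Theta\gamma \cdot \phi,\ \gamma \cdot \psi} = \ip{\phi, \psi}, \qquad \phi,\ \psi \in \V.
\]
A direct check from the formulas in Section~\ref{sec:sahc} gives $\gamma \cdot q(v) = q(\gamma v)$, and combined with $\ip{\phi, q(v)} = \phi(v)$ the identity on rank $1$ points reduces to exactly the computation in (ii). Bilinearity then extends it to all of $\V$, since $\V$ is the $\RR$-span of $\{q(v) : v \in \OO^2\}$. Now I would invoke the defining characterization of $\phi_F$: because $\gamma$ acts on $\Pi$ and permutes its facets, the form $\Theta\gamma \cdot \phi_F$ satisfies $\ip{y, \Theta\gamma \cdot \phi_F} = 1$ for $y \in \gamma \cdot F$ and $> 1$ on $\Pi \setminus \gamma \cdot F$; by uniqueness, $\Theta\gamma \cdot \phi_F = \phi_{\gamma \cdot F}$.

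For the ``in particular'' statement, the forward direction is immediate from (i) and (ii), using $q(\gamma v) = \gamma \cdot q(v)$ to convert minimal vectors into minimal vertices. Conversely, if $M(\phi_1) = \gamma \cdot M(\phi_2)$, then applying $q$ gives $Z_{F_1} = \gamma \cdot Z_{F_2}$; taking convex hulls yields $F_1 = \gamma \cdot F_2$, whence $\phi_1 = \Theta\gamma \cdot \phi_2$ by (i). The argument starting from an equality of minimal vertices is the same with one fewer step. The main obstacle is the pairing equivariance: one must track carefully how the contragredient twist $\Theta\gamma = (\gamma^{*})^{-1}$ interacts with the geometric action $g \cdot A = gAg^{*}$ on $\bcC$, and verify that they are exactly dual under $\ip{\cdot,\cdot}$.
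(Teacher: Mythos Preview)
Your argument is correct. The paper states this proposition without proof, treating it as a routine consequence of the definition of the action \eqref{eq:action}, the identity $\ip{\phi,q(v)}=\phi(v)$, and the uniqueness in the characterization of $\phi_F$; your write-up supplies precisely those verifications, and the one point worth flagging---the contragredient equivariance $\ip{\Theta\gamma\cdot\phi,\gamma\cdot\psi}=\ip{\phi,\psi}$---you handle cleanly by reducing to rank~$1$ points and extending by bilinearity.
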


Thus to classify the perfect forms modulo $\GL_2(\OO)$, one can instead classify the top-dimensional \Vor cones.  The top-dimensional cone corresponding to $\phi$, denoted $\cC_\phi$ is has faces given by the convex hull of $\{q(v)\;|\;v \in M(\phi)\}$.  The program \verb+Polymake+ \cite{polymake} is used to compute the convex hull.  There are $118$ codimension $1$ faces, corresponding to $118$ neighboring top dimensional \Vor cones and $118$ perfect forms.  There are $14$ faces with $12$ vertices, $80$ faces with $9$ vertices, and $24$ faces with $7$ vertices. Using \verb+MAGMA+, the stabilizer $S_\phi$ of $\phi$ is computed.  The group $S_\phi$ has order $600$ and \verb+MAGMA+ type $\ip{600,54}$.

In order to cut down the number of computations that need to be made, we first classify the faces of $\cC_\phi$ modulo $S_\phi$, the stabilizer of $M(\phi)$.  Indeed, let $\psi$ be a perfect form such that $\cC_\psi$ meets $\cC_\phi$ in a codimension 1 face $F$.  If $\gamma \in S_\phi$, then $\gamma \cdot \psi$ is the perfect form corresponding to top cone $\gamma \cdot \cC_\psi$, which meets $\cC_\phi$ in a codimension $1$ face $\gamma \cdot F$.
 
It is clear that $S_\phi$ conjugate faces must have the same number of vertices. One computes that there are $3$ orbits of faces with $12$ vertices.  One orbit $S_\phi \cdot F_1$ consists of $12$ faces, and the other $2$ orbits consist of $1$ element each, denoted $F_2$ and $F_3$.  One shows that $F_2$ is $\GL_2(\OO)$ conjugate to $F_3$ by computing a matrix that sends $F_2$ to $F_3$.  There are $4$ orbits of faces with $9$ vertices.  The orbits $S_\phi \cdot F_4$, $S_\phi \cdot F_5$, $S_\phi \cdot F_6$, and $S_\phi \cdot F_7$ consists of $20$ faces each. There are $2$ orbits of faces with $7$ vertices.  The orbits $S_\phi \cdot F_8$ and $S_\phi \cdot F_9$ consist of $12$ faces each.

To show that there is only $1$ $\GL_2(\OO)$ class of perfect form, it suffices to show that the perfect forms $\phi_i$ associated to the top cones that neighbor $\cC_\phi$ at face $F_i$ are each $\GL_2(\OO)$ conjugate to $\phi$.  To this end, we use the following lemma.

\begin{lem}\label{lem:face}
Let $F,F'$ be two faces of $\cC_\phi$ with associated perfect forms $\psi,\psi'$.  If $\gamma \in \GL_2(\OO) \setminus S_\phi$ and $\gamma \cdot F=F'$, then both $\psi $ and $\psi'$ are $\GL_2(\OO)$-conjugate to $\phi$.
\end{lem}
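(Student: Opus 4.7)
The plan is to exploit that the \Vor decomposition is a $\GL_2(\OO)$-equivariant fan, so every codimension-$1$ face is incident to exactly two top cones, and to use the preceding proposition to pass between the combinatorics of the fan and the action on perfect forms.

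First I would observe that, being a codimension-$1$ face of $\cC_\phi$, the cone $F$ is a face of precisely one other top cone, whose associated perfect form is by definition $\psi$, so $F$ is shared by $\cC_\phi$ and $\cC_\psi$; likewise $F'$ is shared by $\cC_\phi$ and $\cC_{\psi'}$. Since $\GL_2(\OO)$ permutes the top cones of the \Vor decomposition and $\gamma \cdot F = F'$, the unordered pair of top cones through $F$ is mapped by $\gamma$ to the unordered pair through $F'$:
\[
\{\gamma \cdot \cC_\phi,\; \gamma \cdot \cC_\psi\} \;=\; \{\cC_\phi,\; \cC_{\psi'}\}.
\]

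Next I would rule out the possibility $\gamma \cdot \cC_\phi = \cC_\phi$ using $\gamma \notin S_\phi$. By the preceding proposition, the perfect form attached to the top cone $\gamma \cdot \cC_\phi$ is $\Theta \gamma \cdot \phi$; if $\gamma \cdot \cC_\phi = \cC_\phi$, then the uniqueness of the distinguished point $\phi_F \in \cC \cap \V(\QQ)$ associated to a facet of $\Pi$ forces $\Theta \gamma \cdot \phi = \phi$, equivalently $\gamma \cdot M(\phi) = M(\phi)$, contradicting $\gamma \notin S_\phi$. Hence $\gamma \cdot \cC_\phi = \cC_{\psi'}$ and $\gamma \cdot \cC_\psi = \cC_\phi$, and a final invocation of the preceding proposition yields $\Theta \gamma \cdot \phi = \psi'$ and $\Theta \gamma \cdot \psi = \phi$, so both $\psi$ and $\psi'$ lie in the $\GL_2(\OO)$-orbit of $\phi$.

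The only delicate point, which I do not expect to present a genuine obstacle, is the equivalence between setwise stabilization of the top cone $\cC_\phi$ and membership in $S_\phi$; this is where the uniqueness clause characterizing $\phi_F$ is used, and it is the only ingredient beyond the fan structure of the \Vor decomposition and the functoriality statement in the preceding proposition.
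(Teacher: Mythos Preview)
Your argument is correct and follows essentially the same route as the paper's proof: both use that $\gamma\cdot\cC_\phi$ is a top cone containing the codimension-$1$ face $F'$, hence equals $\cC_\phi$ or $\cC_{\psi'}$, rule out the former via $\gamma\notin S_\phi$, and then invoke the preceding proposition. The only cosmetic difference is that the paper treats $\psi'$ first and then says ``repeating the argument with $\gamma^{-1}$'' to handle $\psi$, whereas you obtain both conclusions at once from the equality of unordered pairs $\{\gamma\cdot\cC_\phi,\gamma\cdot\cC_\psi\}=\{\cC_\phi,\cC_{\psi'}\}$.
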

\begin{proof}
Since $\cC_\phi$ is a top-dimensional cone, and $F$ and $F'$ are codimension $1$ faces of $\cC_\phi$, we must have that $\gamma \cdot \cC_\phi=\cC_\phi$ or   $\gamma \cdot \cC_\phi=\cC_{\psi'}$.  Since $\gamma \not \in S_\phi$, it follows that $\gamma \cdot \cC_\phi=\cC_{\psi'}$.  Thus $\Theta \gamma \cdot \phi = \psi'$.  Repeating the argument using $\gamma^{-1}$ shows that $\Theta \gamma^{-1} \cdot \phi = \psi$.
\end{proof}

\subsection{Lower cones}
In an analogous way, one classifies the lower dimensional faces.  
\begin{thm}\label{thm:coneclass}
There is exactly $1$ $\GL_2(\OO)$-class of $8$-cone, $5$ $\GL_2(\OO)$-classes of $7$-cones, $10$ classes of $6$-cones, $11$ classes of $5$-cones, $9$ classes of $4$-cones, $4$ classes of $3$-cones, and $2$ classes of $2$-cones.  
\end{thm}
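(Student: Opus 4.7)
The plan is to reduce the classification to a finite computation on a single top cone. Since the previous subsection establishes that $\cC_\phi$ represents the unique $\GL_2(\OO)$-class of $8$-cones, every lower-dimensional \Vor cone is a face of some top cone, and every top cone is $\GL_2(\OO)$-conjugate to $\cC_\phi$. Hence every $d$-dimensional \Vor cone, for $2 \leq d \leq 7$, is $\GL_2(\OO)$-conjugate to at least one $d$-face of $\cC_\phi$, so the problem becomes one of partitioning the faces of $\cC_\phi$ into $\GL_2(\OO)$-equivalence classes.

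The first step is to enumerate, dimension by dimension, the complete face lattice of $\cC_\phi$. This is a routine \verb+Polymake+ computation applied to the convex hull of the $24$ points $\{q(v)\;|\; v \in M(\phi)\}$ listed in \eqref{eq:perfectmin}. Next, the stabilizer $S_\phi$ of order $600$ acts as a permutation group on each set of $d$-dimensional faces via its action on the $24$ vertices, and a \verb+MAGMA+ orbit computation produces the $S_\phi$-orbits. Finally, one merges $S_\phi$-orbits that coincide modulo $\GL_2(\OO)$, exactly as in the codimension-$1$ case treated in Section~\ref{sec:topcone}: if $\gamma \in \GL_2(\OO)$ sends a face $F$ of $\cC_\phi$ to another face $F'$ of $\cC_\phi$ but $\gamma \notin S_\phi$, then $\gamma\cdot\cC_\phi$ must be a different top cone containing $F'$. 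Since $F'$ lies in only finitely many top cones and each is $\GL_2(\OO)$-conjugate to $\cC_\phi$, the candidates for $\gamma$ are obtained by composing elements of $S_\phi$ with the explicit matrices conjugating $\cC_\phi$ to its neighbors, which were already exhibited when proving uniqueness of the top-cone class.

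The main obstacle is this last step: for small $d$, a single face sits inside many top cones, so the set of candidate conjugating matrices grows and the number of $S_\phi$-orbits to compare is larger. The bookkeeping is organized recursively from high to low dimension. Once the $7$-cone classes are known, every $6$-cone is a face of some $7$-cone, so the merging search for $6$-cones can be restricted to matrices stabilizing an already-classified $7$-cone containing the given face; iterating this restriction through dimensions $5,4,3,2$ keeps the search spaces bounded. Recording the merger pattern of $S_\phi$-orbits at each stage yields the counts $1,5,10,11,9,4,2$ asserted in the theorem.
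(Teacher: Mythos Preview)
Your proposal is correct and follows essentially the same approach as the paper. The paper itself offers no more than the single sentence ``In an analogous way, one classifies the lower dimensional faces,'' referring back to the method of \S\ref{sec:topcone}; your write-up simply spells out what that analogy entails (face lattice via \verb+Polymake+, $S_\phi$-orbits via \verb+MAGMA+, then merging orbits using the explicit matrices that identify $\cC_\phi$ with its neighbors), together with the sensible bookkeeping of descending through dimensions so that stabilizers of already-classified higher cones constrain the search.
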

Table~\ref{tab:cones} gives the $\GL_2(\OO)$ classes of \Vor cones along with the number of $\Pi$ vertices, the rank of the cone, and the stabilizer in $\GL_2(\OO)$ of the cone.  The \verb+MAGMA+ small group type of each stabilizer is given for the finite groups.  In particular, the first component is the order of the group. Let $U^*$ denote the subgroup of upper triangular matrices in $\GL_2(\OO)$ such that the top left entry is a torsion unit.

\begin{table}\label{tab:cones}
\caption{$\GL_2(\OO)$-conjugacy classes of \Vor cones.}
\[
\begin{array}{c c}
\begin{array}{|c|c|c|c|}
\hline
\text{type}&\text{$\#$ of $v$} &\text{cone rank} &\text{stabilizer}\\\hline
A & 24&8&\ip{600, 54}\\
\hline
B_{1} & 9&7&\ip{30, 4}\\
B_{2} & 9&7&\ip{30, 4}\\
B_{3} & 12&7&\ip{100, 14}\\
B_{4} & 12&7&\ip{600, 54}\\
B_{5} & 7&7&\ip{50, 5}\\
\hline
C_{1} & 6&6&\ip{30, 4}\\
C_{2} & 6&6&\ip{10, 2}\\
C_{3} & 7&6&\ip{10, 2}\\
C_{4} & 6&6&\ip{60, 11}\\
C_{5} & 6&6& \ip{50, 5}\\
C_{6} & 6&6&\ip{20, 2}\\
C_{7} & 7&6&\ip{10, 2}\\
C_{8} & 6&6&\ip{20, 2}\\
C_{9} & 6&6& \ip{30, 4}\\
C_{10} & 6&6&\ip{50, 5}\\
\hline
D_{1} & 5&5&\ip{10, 2}\\
D_{2} & 5&5&\ip{10, 2}\\
D_{3} & 5&5&\ip{20, 5}\\
D_{4} & 5&5&\ip{10, 2}\\
D_{5} & 5&5&\ip{10, 2}\\
D_{6} & 5&5&\ip{10, 2}\\
D_{7} & 5&5& \ip{10, 2}\\
D_{8} & 5&5& \ip{10, 2}\\
D_{9} & 5&5&\ip{20, 5}\\
D_{10} & 5&5&\ip{100, 14}\\
D_{11} & 6&5&\ip{20, 5}\\
\hline

\end{array}
& \begin{array}{|c|c|c|c|}
\hline
\text{type}&\text{$\#$ of $v$} &\text{cone rank} &\text{stabilizer}\\\hline
E_{1} & 4&4&\ip{20, 5}\\
E_{2} & 4&4&\ip{20, 2}\\
E_{3} & 4&4& \ip{20, 2}\\
E_{4} & 4&4&\ip{10, 2}\\
E_{5} & 4&4&\ip{10, 2}\\
E_{6} & 4&4&\ip{20, 5}\\
E_{7} & 4&4&\ip{20, 5}\\
E_{8} & 4&4&\ip{10, 2}\\
E_{9} & 4&4&\ip{200, 31}\\
\hline
F_{1} & 3&3&\ip{60, 11}\\
F_{2} & 3&3& \ip{20, 5}\\
F_{3} & 3&3&\ip{100, 16}\\
F_{4} & 3&3&\ip{20, 5}\\
\hline
G_{1} & 2&2&\ip{200, 31}\\
G_{2} & 2&2&\Gamma_\infty\\
\hline
\end{array}
\end{array}\]
\end{table}

\subsection{Classification of forms}
Note that Theorem~\ref{thm:coneclass} gives a classification of binary Hermitian forms over $F$ based on the configuration of the minimal vectors.  Indeed, by duality, the vertices of the cones that arise above correspond to $\GL_2(\OO)$-classes of configurations of minimal vectors for forms over $F$ \cite{AM}.  For example, there are 4 distinct classes of cones with 3 vertices.  Hence there are 4 distinct $\GL_2(\OO)$-types of binary Hermitian forms over $F$ with exactly 3 minimal vectors.  One can distinguish the types by studying the configuration of minimal vectors.  

Let $[\cdot]$ denote the subset of the minimal vectors of the perfect form in the order given in \eqref{eq:perfectmin}.  For example, $[5,23] = \{e_1,e_2\}$.  Since $F$ has class number one, every primitive vector in $\OO^2$ is $\GL_2(\OO)$-conjugate to $e_1$.  Combined with Theorem~\ref{thm:coneclass}, one computes the following.
\begin{thm}
Let $\phi$ be a binary Hermitian form over $F$.  Then $M(\phi)$ is $\GL_2(\OO)$ conjugate to exactly one of the following.
\begin{gather*}
[5],\ [5, 20],\ [ 5, 23 ],\ 
[ 5, 8, 23 ],\ 
[ 5, 22, 23 ],\ 
[ 5, 20, 23 ],\ 
[ 5, 10, 23 ],\ 
[ 5, 8, 22, 23 ],\ \\
[ 4, 5, 8, 23 ],\ 
[ 5, 8, 10, 23 ],\ 
[ 5, 8, 18, 23 ],\ 
[ 5, 18, 20, 23 ],\ 
[ 5, 15, 17, 23 ],\ 
[ 5, 19, 20, 23 ],\ \\ 
[ 5, 18, 19, 23 ],\  
[ 5, 20, 23, 24 ],\ 
[ 4, 5, 8, 18, 23 ],\ 
[ 5, 8, 10, 12, 23 ],\ 
[ 5, 8, 20, 22, 23 ],\ \\ 
[ 4, 5, 8, 9, 23 ],\ 
[ 5, 8, 18, 19, 23 ],\ 
[ 5, 8, 12, 20, 23 ],\ 
[ 5, 8, 18, 22, 23 ],\ 
[ 5, 18, 19, 20, 23 ],\ \\ 
[ 5, 20, 22-24 ],\ 
[ 5, 9, 15, 17, 23 ],\ 
[ 5, 8, 18, 19, 20, 22 ],\ 
[ 5, 8, 10, 12, 21, 23 ],\ \\ 
[ 5, 8, 12, 20, 21, 23 ],\ 
[ 5, 8, 18, 19, 20, 22, 23 ],\  
[ 5, 8, 20, 22-24 ],\ 
[ 5, 8, 9, 15, 17, 23 ],\  \\
[ 5, 8, 10, 22-24 ],\ 
[ 3-5, 8, 15, 18, 23 ],\ 
[ 4, 5, 8, 18, 22, 23 ],\ 
[ 4, 5, 8, 9, 15, 23 ],\ \\ 
[ 5, 9, 15-17, 23 ],\ 
[ 5, 8, 10, 12, 20-24 ],\ 
[ 3, 5, 8, 18-20, 22-24 ],\ \\ 
[ 3-5, 7, 8, 10, 13, 15, 18, 22-24 ],\ 
[ 1, 4, 5, 8, 9, 11, 13-17, 23 ],\ 
[ 5, 6, 9, 15-17, 23 ],\ \\
[1-24]
\end{gather*}
   
\end{thm}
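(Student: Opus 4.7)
The plan rests on the Ash--McConnell duality \cite{AM}, which identifies $\GL_2(\OO)$-orbits of Voronoi cones (meeting $\cC$ in their relative interior) with $\GL_2(\OO)$-orbits of minimal vector configurations taken modulo torsion units. Concretely, for $\phi \in \cC$ let $\sigma(\phi)$ denote the smallest Voronoi cone whose relative interior contains $\phi$; then, using Proposition~\ref{prop:torsion}, the vertices of $\sigma(\phi)$ are precisely the points $q(v)$ as $v$ ranges over primitive representatives of $M(\phi)$ modulo the cyclic group of torsion units. The classification of the sets $M(\phi)$ up to $\GL_2(\OO)$-conjugacy therefore reduces to Theorem~\ref{thm:coneclass}.

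For each Voronoi cone class in Table~\ref{tab:cones}, one picks a representative cone and reads off its vertices as a set of primitive vectors $\{v_1,\ldots,v_k\} \subset \OO^2$, well-defined modulo torsion. The bracket notation $[\,\cdot\,]$ in the statement records these vectors by their position within \eqref{eq:perfectmin}, so the task is to produce $\gamma \in \GL_2(\OO)$ conjugating $\{v_1,\ldots,v_k\}$ into a subset of the 24 minimal vectors listed there. This is achieved in two stages. Since $F$ has class number one, every primitive vector in $\OO^2$ is $\GL_2(\OO)$-conjugate to $e_1$, so a choice of $\gamma_1 \in \GL_2(\OO)$ with $\gamma_1 v_1 = e_1$ places one of the vectors at position $5$ of \eqref{eq:perfectmin}. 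The stabilizer of $e_1$ modulo torsion --- namely the group $U^*$ of upper triangular matrices in $\GL_2(\OO)$ with torsion-unit top-left entry --- then acts on the remaining $\gamma_1 v_2,\ldots,\gamma_1 v_k$, and a finite search within $U^*$ yields $\gamma_2 \in U^*$ sending each into the enumerated 24.

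Exclusivity of the listed configurations is inherited automatically from Theorem~\ref{thm:coneclass}: two configurations that are $\GL_2(\OO)$-conjugate would come from $\GL_2(\OO)$-conjugate Voronoi cones. This can also be cross-checked against the invariants in Table~\ref{tab:cones} --- the number of vertices (equal to the size of the bracket), the cone rank, and the stabilizer order --- which together already distinguish many of the classes. The main obstacle is not conceptual but computational: each of the roughly forty cone classes demands an explicit matrix reduction in $\GL_2(\OO)$, and the arithmetic in $F = \QQ(\z_5)$ is unwieldy by hand. In practice these reductions are performed with \verb+MAGMA+, extending the computational framework already developed in Section~\ref{sec:voronoi}.
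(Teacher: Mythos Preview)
Your proposal is correct and follows essentially the same route as the paper: the Ash--McConnell duality reduces the classification of $M(\phi)$ to Theorem~\ref{thm:coneclass}, the class-number-one observation lets one normalize a vertex to $e_1$, and the remaining identification is a \verb+MAGMA+ computation. The paper's own argument is in fact terser than yours---it simply says ``Combined with Theorem~\ref{thm:coneclass}, one computes the following''---so your elaboration of the two-stage reduction is a welcome expansion rather than a deviation. One small imprecision: you describe ``a finite search within $U^*$,'' but $U^*$ is infinite (the upper-right entry ranges over all of $\OO$ and the lower-right over $\OO^\times$); in practice the search is bounded because the target set of $24$ vectors is finite and fixed, so only finitely many cosets of the torsion subgroup need be tested, but it would be cleaner to say so explicitly.
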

\end{section}

\bibliography{../references}    
\end{document}